\newtheorem{theorem}{Theorem}
\newtheorem{lemma}[theorem]{Lemma}
\newtheorem{prop}[theorem]{Proposition}
\newtheorem{corollary}[theorem]{Corollary}
\theoremstyle{definition}
\newtheorem{definition}[theorem]{Definition}
\theoremstyle{remark}
\newtheorem{remark}[theorem]{Remark}
\newcommand{\DD}{{\mathbb D}}
\newcommand{\bL}{{\mathbb L}}
\newcommand{\NN}{{\mathbb N}}
\newcommand{\GG}{{\mathbb G}}
\newcommand{\EE}{{\mathbb E}}
\newcommand{\RR}{{\mathbb R}}
\newcommand{\CC}{{\mathbb C}}
\newcommand{\TT}{{\mathbb T}}
\DeclareMathOperator{\aut}{Aut}
\DeclareMathOperator{\id}{id}
\DeclareMathOperator{\Id}{I}
\DeclareMathOperator{\GL}{GL}
\DeclareMathOperator{\SO}{SO}
\renewcommand{\phi}{\varphi}
\begin{document}
\title{A family of Lempert domains}
\author{Armen Edigarian}
\address{Faculty of Mathematics and Computer Science \\
     Jagiellonian University\\
     Prof. St. \L ojasiewicza 6\\
     30-348 Krak\'ow, Poland \\ 
orcid: 0000-0003-1789-1287}

\begin{abstract}
In \cite{G-Z} G.~Ghosh and W. Zwonek introduced a new class of domains $\bL_n$, $n\ge1$, which are 2-proper holomorphic images of the Cartan domains of type four. This family contains biholomorphic images of the symmetrized bidisc and the tetrablock. It is well-known, that symmetrized bidisc and tetrablock are Lempert type domains. In our paper we show that the whole family of domains $\bL_n$ are Lempert domains.
\end{abstract}

\subjclass[2020]{Primary: 32F45; Secondary: 32M15, 32Q02}
\keywords{the classical Cartan domain of type four, automorphisms of the Cartan domain, Lempert theorem}

\maketitle

\section{Introduction}
Let $X$ be a complex manifold. In many applications, it is natural to consider on $X$ a distance $d_X$ related to the complex structure and consider $(X,d_X)$ as a metric space. It turns out that there are many natural ways to introduce these distances. For a good reference of this subject, see e.g. \cite{J-P1}. The largest one is the \emph{Lempert function}, related to the Kobayashi pseudodistance. We denote by $\DD$ the unit disc and by $\TT$ the unit circle in the complex plane $\CC$. For simplicity, we restrict ourselves to domains in the space $\CC^n$. Let $D\subset\CC^n$ be a domain and let $z,w\in D$ be any points. The Lempert function is defined as
$$
\ell_D(z,w)=\inf\{\rho(0,\sigma):f:\DD\to D\text{ holomorphic},\\ f(0)=z, f(\sigma)=w\},
$$
where $\rho$ is the Poincar\'e distance in the disc (see \cite{J-P1}, Chapter III).

On the other hand, the smallest one is the \emph{Carath\'eodory pseudodistance} (see \cite{J-P1}, Chapter II). We put
$$
c_D(z,w)=\sup\{\rho(f(z),f(w)): f:D\to\DD\text{ holomorphic}\}, \quad z,w\in D.
$$
It is well-known that for the unit ball, products of the unit balls, and some modifications, these two functions agree, i.e., there is only one natural way to introduce a distance on these domains. More generally, it is true on classical Cartan domains,i.e., bounded symmetric homogeneous domains in $\CC^n$ (see \cite{Hua}). The proof uses that these domains are homogeneous and the Schwarz lemma type results at the origin. Domains, on which we have this property, sometimes are called \emph{Lempert domains}.

In 1981 L. Lempert \cite{Lem} proved a fundamental result that for a convex domain $D\subset\CC^n$ there is an equality
\begin{equation}\label{eq:Lem}
c_D=\ell_D.
\end{equation}
Later it was extended by L. Lempert to bounded strongly linearly convex pseudoconvex domains (see \cite{Lem2}).

For more than 20 years it was an open question, whether there are other types of Lempert domains, besides biholomorphic images of convex ones and some simple modifications.
In a series of papers \cite{A-Y1, A-Y2, A-Y3} J.~Agler and N.J.~Young introduced and analysed the symmetrized bidisc, which was the first known bounded hyperconvex domain for which we have the equality of the Carath\'eodory distance and Lempert function but which cannot be exhausted by domains biholomorphic to convex domains. The symmetrized bidisc $\GG_2$ is the image of the bidisc $\DD^2$ under the symmetrization mapping
$$
\DD^2\ni(\lambda_1,\lambda_2)\to (\lambda_1+\lambda_2,\lambda_1\lambda_2)\in\CC^2.
$$

In \cite{A-W-Y} the authors introduced another domain, tetrablock in $\CC^3$, which has the same properties (for the definition and the properties of the tetrablock see below).

Recently G.~Ghosh and W.~Zwo\-nek (see \cite{G-Z}) proposed a new class of domains, which includes symmetrized bidisc and tetrablock. Our main aim is to show that for domains in this class equality \eqref{eq:Lem} holds, so they are Lempert domains.

Let $\langle z,w\rangle=\sum_{j=1}^n z_j\bar w_j$
for $z,w\in\mathbb{C}^n$ be the Hermitian 
inner product in $\CC^n$ and let $\|z\|^2=\langle z,z\rangle$ be the corresponding norm. We define another norm in $\CC^nC$, as follows
$$
p(z)^2=\|z\|^2+\sqrt{\|z\|^4-|\langle z,\bar z\rangle|^2}, \quad z\in\CC^n.
$$
The unit ball in this norm we denote by $L_n=\{z\in\CC^n: p(z)<1\}$ and call
it the Cartan domain of type four (sometimes it is called the Lie ball of dimension $n\ge1$). It is a bounded symmetric homogeneous domain (see \cite{C1}, \cite{C2}).

Following \cite{G-Z} we define a 2-proper holomorphic mapping
$$
\Lambda_n:\CC^n\ni (z_1,z_2,\dots,z_n)\mapsto
(z_1^2,z_2,\dots,z_n)\in\CC^n
$$
and put $\bL_n=\Lambda_n(L_n)$.

The main result of the paper is the following.
\begin{theorem}\label{thm:1!} For any $z,w\in\bL_n$ we have
$$
c_{\bL_n}(z,w)=\ell_{\bL_n}(z,w).
$$
\end{theorem}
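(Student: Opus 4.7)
The plan is to exploit the 2-proper covering $\Lambda_n \colon L_n \to \bL_n$ together with the fact that the Cartan domain $L_n$, being bounded symmetric and homogeneous, is itself a Lempert domain by Hua's theorem. Since $c_{\bL_n} \leq \ell_{\bL_n}$ always holds, the content of Theorem~\ref{thm:1!} is to produce, for each pair $z,w \in \bL_n$, a holomorphic disc in $\bL_n$ joining $z$ and $w$ whose length in the Poincar\'e metric already realises the Carath\'eodory distance $c_{\bL_n}(z,w)$.

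First I would pick lifts $\tilde z, \tilde w \in L_n$ with $\Lambda_n(\tilde z) = z$, $\Lambda_n(\tilde w) = w$ and denote by $\sigma(z_1,\ldots,z_n) = (-z_1, z_2, \ldots, z_n)$ the covering involution of $\Lambda_n$. Using the homogeneity of $L_n$, one composes with an automorphism of $L_n$ sending $\tilde z$ to the origin; the subgroup $\SO(n-1,\RR)$ acting on $(z_2,\ldots,z_n)$ then brings $\tilde w$ into a low-dimensional coordinate slice. The classical theory of bounded symmetric domains supplies an explicit complex geodesic $\tilde \phi \colon \DD \to L_n$ with $\tilde \phi(0) = \tilde z$ and $\tilde \phi(\tau) = \tilde w$, where $\rho(0,\tau) = \ell_{L_n}(\tilde z,\tilde w) = c_{L_n}(\tilde z,\tilde w)$. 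Composing with $\Lambda_n$ yields an analytic disc $\phi = \Lambda_n \circ \tilde \phi \colon \DD \to \bL_n$ with $\phi(0) = z$, $\phi(\tau) = w$, and after optimising over the two choices $\tilde w$ and $\sigma(\tilde w)$ one obtains the upper bound
$$
\ell_{\bL_n}(z,w) \;\leq\; \rho(0,\tau) \;=\; \min\{\ell_{L_n}(\tilde z, \tilde w),\; \ell_{L_n}(\tilde z, \sigma(\tilde w))\}.
$$

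The central step is to produce a matching lower bound on $c_{\bL_n}(z,w)$. For this I would construct a $\sigma$-invariant holomorphic function $F \colon L_n \to \DD$, which by the 2-properness of $\Lambda_n$ descends to a holomorphic function $f \colon \bL_n \to \DD$, and arrange that $F \circ \tilde \phi$ is a M\"obius transformation of $\DD$ (or has a M\"obius factor) realising the Poincar\'e distance $\rho(0,\tau)$. Natural candidates for such $F$ come from the family of functions built from automorphisms of $L_n$ that commute with $\sigma$, together with Cayley-type inner factors implicit in the definition of the norm $p$. This would give $c_{\bL_n}(z,w) \geq \rho(f(z),f(w)) = \rho(0,\tau)$, closing the loop.

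The main obstacle, I expect, lies in handling the ramification locus $\{z_1 = 0\}$ and the interaction between the two lifts. For generic pairs $(z,w)$ both choices of $\tilde w$ yield candidate geodesics, and identifying which one is optimal \emph{and} showing that the constructed $F$ is simultaneously extremal for that choice is a delicate bookkeeping problem. In addition, the $\sigma$-invariance constraint forces $F$ to be built from quadratic (or higher) polynomial combinations of the natural linear functionals on $L_n$, so verifying that $F \circ \tilde \phi$ degenerates to a single M\"obius factor reduces to showing that an auxiliary polynomial on $\DD$ has prescribed zeros on $\TT$; this is the point at which the precise form of $p$ and the explicit parametrisation of complex geodesics in the Lie ball must be used in full.
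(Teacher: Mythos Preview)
Your approach is genuinely different from the paper's, and the gap you yourself flag in the ``central step'' is real and, as written, fatal.

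The paper does not attempt to lift to $L_n$ and push down an extremal pair (geodesic, left inverse). Instead it reduces everything to the already--known tetrablock case $n=3$. The two ingredients are: (i) for any pair $z,w\in\bL_n$ there is an automorphism of $\bL_n$ carrying $z$ to $(\rho,0,\dots,0)$ and $w$ into the slice $\bL_3\times\{0\}$ (this uses Young's description of $\aut(\EE)$, the biholomorphism $\bL_3\cong\EE$, the lifting $\aut(\bL_{n-1})\hookrightarrow\aut(\bL_n)$, and an $\SO_{n-1}(\RR)$--rotation in the last coordinates); and (ii) the slice $\bL_3\times\{0\}\subset\bL_n$ is a holomorphic retract, so $c_{\bL_n}$ and $\ell_{\bL_n}$ restricted to that slice coincide with $c_{\bL_3}$ and $\ell_{\bL_3}$. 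Since $\bL_3\cong\EE$ is already known to be Lempert, the theorem follows. No new extremal function is ever built.

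Your plan, by contrast, requires you to produce for each pair a $\sigma$--invariant $F\colon L_n\to\DD$ whose restriction to the chosen complex geodesic is a M\"obius map. That is not a bookkeeping issue; it is exactly the content of the theorem. For a general $2$--to--$1$ proper image of a Lempert domain there is no reason such an $F$ exists (compare the symmetrized polydisc $\GG_m$ for $m\ge3$, a proper image of the polydisc which is \emph{not} Lempert). In the case $n=3$ the construction of the extremal function is precisely the hard part of the Edigarian--Kosi\'nski--Zwonek proof for the tetrablock, and your sketch gives no indication of how to carry it out, let alone uniformly in $n$. The phrases ``natural candidates'' and ``Cayley-type inner factors'' are placeholders, not arguments; until an explicit $F$ is written down and checked, the lower bound $c_{\bL_n}(z,w)\ge\rho(0,\tau)$ is simply unproved. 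The paper's route sidesteps this entirely by invoking the $n=3$ result as a black box and using automorphisms plus a retract to reach it.
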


As we mentioned above, Theorem~\ref{thm:1!} was known for $n=2$ (symmetrized bidisc) and for $n=3$ (tetrablock \see{E-K-Z}). So, for any $n\ge4$ it provide us with example of non-trivial (i.e., product of domains biholomorphic to convex ones) non-convex domains so that the equality \eqref{eq:Lem} holds. Moreover, our proof shows that these domains, being a modification of the homogeneous symmetric domains, in a sense, are quite natural in relation with the invariant distances and metrics.

\section{The group of automorphisms}
In this part we describe automorphisms of the Cartan domain of type four in $\CC^n$ and give some useful properties. For more information see \cite{Fab}, \cite{Hua}, \cite{Morita}.

For $z\in\CC^n$ we put
$$
a(z)=\sqrt{\frac{\|z\|^2+|\langle z,\bar z\rangle|}{2}}\quad\text{ and }\quad b(z)=\sqrt{\frac{\|z\|^2-|\langle z,\bar z\rangle|}{2}}.
$$
In \cite{Abate1}, page 278, the numbers $a(z)$ and $b(z)$ are called the modules related to the Cartan domain of type four. Note that
$p(z)=a(z)+b(z)$ for any $z\in\CC^n$. For a point $z\in\CC^n$ we have $p(z)<1$ if and only if
\begin{equation}\label{eq:3}
\|z\|<1\quad\text{ and }\quad 2\|z\|^2<1+|\langle z,\bar z\rangle|^2.
\end{equation}
From this inequality we have.
\begin{lemma} Let $z=(z_1,z')\in\CC\times\CC^{n-1}$. Then $z\in\bL_n$ if and only if 
\begin{equation}\label{eq:4}
|z_1|+\|z'\|^2<1\quad\text{ and }\quad 2|z_1|+2\|z'\|^2<1+|z_1+\langle z',\bar z'\rangle|^2.
\end{equation}
\end{lemma}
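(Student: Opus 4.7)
The plan is to unwind the definition $\bL_n=\Lambda_n(L_n)$ and apply the characterization \eqref{eq:3} of membership in $L_n$, observing that the substitution $w_1^2=z_1$ eliminates any ambiguity about the choice of square root.

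First I would note that a point $z=(z_1,z')$ lies in $\bL_n$ if and only if there exists $w_1\in\CC$ with $w_1^2=z_1$ such that $w:=(w_1,z')$ lies in $L_n$. By \eqref{eq:3}, the latter amounts to
$$
\|w\|^2<1\quad\text{and}\quad 2\|w\|^2<1+|\langle w,\bar w\rangle|^2.
$$
Next I would compute the two pieces of data that appear in \eqref{eq:3} in terms of the original coordinates: since
$$
\|w\|^2=|w_1|^2+\|z'\|^2\quad\text{and}\quad \langle w,\bar w\rangle=w_1^2+\langle z',\bar z'\rangle,
$$
and since $w_1^2=z_1$ forces both $|w_1|^2=|z_1|$ and $\langle w,\bar w\rangle=z_1+\langle z',\bar z'\rangle$, the two displayed inequalities become precisely the conditions \eqref{eq:4}.

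Finally, I would observe that because only $|w_1|^2$ and $w_1^2$ enter, the validity of these inequalities is independent of which square root of $z_1$ one picks. Hence the existence of such a $w_1$ is equivalent to \eqref{eq:4} itself, which gives both implications simultaneously.

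There is no serious obstacle here; the lemma is essentially a translation of \eqref{eq:3} across the proper map $\Lambda_n$, and the only thing worth double-checking is the point just made, namely that the defining inequalities of $L_n$ factor through $|w_1|^2$ and $w_1^2$, so that the two preimages of $z_1$ under $\Lambda_n$ are simultaneously in or out of $L_n$.
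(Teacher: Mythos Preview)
Your argument is correct and is exactly the computation the paper has in mind: the lemma is stated immediately after \eqref{eq:3} with no separate proof, since it is a direct rewriting of \eqref{eq:3} under the substitution $w_1^2=z_1$. Your explicit check that only $|w_1|^2$ and $w_1^2$ appear, so the choice of square root is irrelevant, is the one point worth spelling out, and you have done so.
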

Note that if $(z_1,\dots,z_{n-1})\in L_{n-1}$ then 
$(z_1,\dots,z_{n-1},0)\in L_{n}$.
Let us show the following simple  inequality.
\begin{lemma} Let $z=(z_1,z')\in\CC\times\CC^{n-1}$.
Assume that $\|z\|<1$.
Then
$$
2\|z'\|^2-|\langle z',\bar z'\rangle|^2\le
2\|z\|^2-|\langle z,\bar z\rangle|^2.
$$
Moreover, the equality holds if and only if $z_1=0$.
\end{lemma}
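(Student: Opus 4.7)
The plan is a direct algebraic computation. Write $z=(z_1,z')$ with $z'=(z_2,\dots,z_n)$, and set $u=\langle z',\bar z'\rangle=\sum_{j=2}^n z_j^2$. Then
$$
\|z\|^2=|z_1|^2+\|z'\|^2,\qquad \langle z,\bar z\rangle=z_1^2+u.
$$
Substituting into the desired difference
$$
\Delta:=\bigl(2\|z\|^2-|\langle z,\bar z\rangle|^2\bigr)-\bigl(2\|z'\|^2-|\langle z',\bar z'\rangle|^2\bigr)
$$
and expanding $|z_1^2+u|^2-|u|^2=|z_1|^4+2\re(z_1^2\bar u)$ gives
$$
\Delta=2|z_1|^2-|z_1|^4-2\re(z_1^2\bar u).
$$

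Next I would bound the cross term. By the triangle inequality $|u|=\bigl|\sum_{j\ge 2}z_j^2\bigr|\le\sum_{j\ge 2}|z_j|^2=\|z'\|^2$, so
$$
-2\re(z_1^2\bar u)\ge -2|z_1|^2|u|\ge -2|z_1|^2\|z'\|^2,
$$
hence
$$
\Delta\ge 2|z_1|^2\bigl(1-\|z'\|^2\bigr)-|z_1|^4.
$$
The assumption $\|z\|<1$ gives $1-\|z'\|^2>|z_1|^2$, so $\Delta>2|z_1|^4-|z_1|^4=|z_1|^4\ge 0$. This proves the inequality, and shows that $\Delta>0$ whenever $z_1\ne 0$.

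For the equality case, one direction is immediate: if $z_1=0$ then $\|z\|=\|z'\|$ and $\langle z,\bar z\rangle=\langle z',\bar z'\rangle$, so the two sides coincide. The converse follows from the strict bound $\Delta>|z_1|^4$ established above. The calculation has no real obstacle; the only point requiring a moment of care is that the strict hypothesis $\|z\|<1$ (rather than $\le 1$) is what absorbs the potentially negative $|z_1|^4$ term via $1-\|z'\|^2>|z_1|^2$, which is what pins down the equality characterization.
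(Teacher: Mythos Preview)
Your argument is correct and is essentially the same as the paper's: both expand $|\langle z,\bar z\rangle|^2-|\langle z',\bar z'\rangle|^2=|z_1|^4+2\re(z_1^2\overline{\langle z',\bar z'\rangle})$, bound the cross term by $|z_1|^2|\langle z',\bar z'\rangle|\le|z_1|^2\|z'\|^2$, and finish using $|z_1|^2+\|z'\|^2<1$. One tiny expositional point: the displayed strict inequality $\Delta>|z_1|^4$ only follows when $z_1\ne0$ (multiplying $1-\|z'\|^2>|z_1|^2$ by $2|z_1|^2=0$ loses strictness), but since you handle $z_1=0$ separately in the equality discussion nothing is actually missing.
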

\begin{proof}
We have to show
$|\langle z,\bar z\rangle|^2-
|\langle z',\bar z'\rangle|^2\le2|z_1|^2$.
This is equivalent with
$
|z_1|^4+2\Re\big(\bar z_1^2\langle z',\bar z')\big)\le2|z_1|^2$.
It suffices to show
$|z_1|^4+2|z_1|^2|\langle z',\bar z'\rangle|\le2|z_1|^2$.
But we have
$|z_1|^2+2|\langle z',\bar z'\rangle|\le
|z_1|^2+2\|z'\|^2<2$.
\end{proof}
Assume that $z=(z_1,\dots,z_n)\in L_n$. Then from the above Lemma we have $(z_1,\dots,z_{n-1})\in L_{n-1}$.

In all our considerations, the group of real special orthogonal matrices $\SO_n(\RR)$, i.e., real matrices $A$ such that $A^T A=A A^T=\Id_n$ and $\det A=1$, is essencial. This follows from the properties: $\langle Az,Aw\rangle=\langle z,w\rangle$ and
 $\langle Az,\overline{Aw}\rangle=\langle z,\bar w\rangle$
for any $z,w\in\CC^n$ and any $A\in\SO_n(\RR)$. In particular, $p(Az)=p(z)$ for any $z\in\CC^n$ and any $A\in\SO_n(\RR)$.

Let $z\in\CC^n$. Then there exists an $\eta\in\TT$
such that $\eta^2\langle z,\bar z\rangle=|\langle z,\bar z\rangle|$. Note that
 $\eta^2\langle z,\bar z\rangle=\langle \eta z,\overline{\eta z}\rangle$. 
If $\eta z=u+iv$, where $u,v\in\RR^n$, then vectors $u$ and $v$ are orthogonal in $\RR^n$ and  $\|u\|\ge\|v\|$. 
Easy calculations show that $\|u\|=a(z)$ and $\|v\|=b(z)$.
So, there exists a matrix $A\in\SO_n(\RR)$ (a 'rotation' of $\RR^n$) such that $Au=(a(z),0,\dots0)$ and $Av=(0,b(z),\dots,0)$. We get $\eta Az=A(\eta z)=(a(z),b(z)i,\dots,0)$. We formulate the above considerations as a separate result.
\begin{lemma}
For any $z\in\CC^n$, $n\ge2$, there exists a matrix $A\in\SO_n(\RR)$ and a number $\eta\in\TT$ such that
$$
\eta Az=
A(\eta z)=(a(z),b(z)i,0,\dots,0).
$$
\end{lemma}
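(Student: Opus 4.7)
The plan is to extract the required $\eta$ and $A$ step by step from the Hermitian form $\langle z,\bar z\rangle$ and the decomposition of $z$ into real and imaginary parts after a suitable phase rotation. First I would fix $\eta\in\TT$ satisfying $\eta^2\langle z,\bar z\rangle=|\langle z,\bar z\rangle|$; such $\eta$ exists because the task amounts to extracting a square root of a unimodular complex number (any $\eta\in\TT$ works if $\langle z,\bar z\rangle=0$). Writing $\eta z=u+iv$ with $u,v\in\RR^n$, I would expand
$$\langle \eta z,\overline{\eta z}\rangle=\sum_{j=1}^n(u_j+iv_j)^2=\|u\|^2-\|v\|^2+2i\langle u,v\rangle_{\RR^n}$$
and compare with the non-negative real number $|\langle z,\bar z\rangle|$. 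This forces $\langle u,v\rangle_{\RR^n}=0$ and $\|u\|^2-\|v\|^2=|\langle z,\bar z\rangle|$, and in particular $\|u\|\ge\|v\|$.

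Combining this with $\|u\|^2+\|v\|^2=\|\eta z\|^2=\|z\|^2$ and solving the resulting $2\times 2$ linear system yields $\|u\|^2=a(z)^2$ and $\|v\|^2=b(z)^2$. To produce $A$, in the generic case where both $u$ and $v$ are nonzero, I would complete the orthonormal pair $\{u/a(z),\,v/b(z)\}\subset\RR^n$ to an orthonormal basis, place these vectors as the columns of an orthogonal matrix $M$, and set $A=M^T$; then $Au=a(z)e_1=(a(z),0,\dots,0)$ and $Av=b(z)e_2=(0,b(z),0,\dots,0)$. If $\det A=-1$, I would flip the sign of the last completing basis vector, which is legitimate since $n\ge 2$, to land in $\SO_n(\RR)$. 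The degenerate cases where $u$ or $v$ vanishes are handled similarly, exploiting the remaining freedom in choosing orthonormal completions.

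Finally, applying $A$ to $\eta z=u+iv$ and using that $A$ has real entries gives
$$A(\eta z)=Au+iAv=(a(z),b(z)i,0,\dots,0),$$
which also equals $\eta Az$ by $\RR$-linearity of $A$. The main point of care throughout is ensuring $\det A=+1$ together with the bookkeeping in the degenerate cases; once these are dispatched, the rest is direct manipulation of the defining identities of $a(z)$ and $b(z)$.
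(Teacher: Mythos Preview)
Your proof is correct and follows essentially the same approach as the paper: choose $\eta$ so that $\eta^2\langle z,\bar z\rangle\ge 0$, decompose $\eta z=u+iv$ to obtain orthogonal real vectors with $\|u\|=a(z)$, $\|v\|=b(z)$, and then rotate in $\SO_n(\RR)$. You simply make explicit the ``easy calculations'' and the determinant-sign fix that the paper leaves to the reader.
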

Fixing first $k$-coordinates and "rotating" last $n-k$ coordinates we have
\begin{lemma}
For any $z\in\CC^n$, $n\ge3$, and any $k\in\NN$ with $k\le n$, there exists a matrix $A\in\SO_n(\RR)$ and a number $\eta\in\TT$ such that
$Aw\in\{(w_1,\dots,w_k)\}\times\CC^{n-k}$ for any $w=(w_1,\dots,w_n)\in\CC^n$ and
$$
Az=(z_1,\dots,z_k,\eta a(z'),\eta b(z')i,0,\dots,0),
$$
where $z'=(z_{k+1},\dots,z_n)\in\CC^{n-k}$.
\end{lemma}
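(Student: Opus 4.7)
The plan is to deduce this lemma directly from the preceding one by embedding $\SO_{n-k}(\RR)$ block-diagonally into $\SO_n(\RR)$ so that it acts as the identity on the first $k$ coordinates. With such an embedding, the first $k$ components of any vector are automatically preserved, while the last $n-k$ components can be put into the standard normal form supplied by the preceding lemma.

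First I would apply the preceding lemma to the truncated vector $z'=(z_{k+1},\dots,z_n)\in\CC^{n-k}$, producing $B\in\SO_{n-k}(\RR)$ and $\eta\in\TT$ with
$$
B(\eta z')=\bigl(a(z'),b(z')i,0,\dots,0\bigr),
$$
equivalently $Bz'=\bar\eta\bigl(a(z'),b(z')i,0,\dots,0\bigr)$. Next I would form the $n\times n$ block-diagonal matrix $A=\diag(\Id_k,B)$; block-diagonality immediately yields $A^TA=AA^T=\Id_n$ and $\det A=\det B=1$, so $A\in\SO_n(\RR)$, and for every $w=(w_1,\dots,w_n)\in\CC^n$ one reads off $Aw=(w_1,\dots,w_k,Bw'')$ with $w''=(w_{k+1},\dots,w_n)$, which is exactly the required fixed-prefix property. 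Specialising to $z$ gives $Az=(z_1,\dots,z_k,Bz')$, and substituting the formula for $Bz'$ rearranges into the claimed expression (after relabelling the unimodular constant $\bar\eta$ as $\eta$).

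The only real obstacle is bookkeeping for the boundary values of $k$, where $n-k\in\{0,1\}$ and the preceding lemma does not literally apply. For $k=n$ one takes $A=\Id_n$ and the statement is trivial. For $k=n-1$ one has $z'=z_n\in\CC$, hence $a(z')=|z_n|$ and $b(z')=0$; one chooses $B=1\in\SO_1(\RR)$ together with $\eta\in\TT$ satisfying $\eta z_n=|z_n|$, and reads the displayed formula with the entry $\eta b(z')i$ suppressed, since it would fall outside the ambient $\CC^n$. All remaining cases $k\le n-2$ are covered by the block-diagonal construction above.
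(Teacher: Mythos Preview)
Your argument is correct and is exactly the approach the paper indicates: the paper's entire proof is the single sentence ``Fixing first $k$-coordinates and `rotating' last $n-k$ coordinates we have'', which is precisely your block-diagonal embedding $A=\diag(\Id_k,B)$ with $B$ coming from the preceding lemma. Your additional care with the degenerate cases $k=n$ and $k=n-1$ (where the displayed normal form does not literally fit in $\CC^n$) goes beyond what the paper spells out, but is harmless and in fact clarifies an implicit restriction in the statement.
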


Note that for any $a,b\in\RR$ we have $(a,bi,0,\dots,0)\in L_n$ if and only if $|a|+|b|<1$.

Recall the following well-known result.
\begin{theorem} Let $\Phi:L_n\to L_n$ be a biholomorphic mapping such that $\Phi(0)=0$. Then
there exist a matrix $A\in\SO_d(\RR)$ and a number $\eta\in\TT$ such that $\Phi(z)=\eta Az$ for any $z\in L_n$.
\end{theorem}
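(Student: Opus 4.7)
The plan is to follow the classical two-step route: use Cartan's linearity theorem to reduce to a linear problem, then identify the relevant linear group. Since $L_n$ is a bounded domain containing the origin and is circular (the identity $p(\eta z)=p(z)$ for $\eta\in\TT$ is immediate from the definition of $p$), H.~Cartan's uniqueness theorem forces any origin-preserving biholomorphism $\Phi$ to be the restriction of a $\CC$-linear map. Writing $\Phi(z)=Tz$ with $T\in\GL_n(\CC)$, the condition $T(L_n)=L_n$ is equivalent to $p(Tz)=p(z)$ for every $z\in\CC^n$.

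Next I would show $T\in\mathrm U(n)$, i.e., $T$ preserves the standard Hermitian norm $\|\cdot\|$. The linear isotropy group $I=\{T\in\GL_n(\CC):T(L_n)=L_n\}$ is compact (as $L_n$ is bounded), so by averaging it preserves some positive Hermitian form $H$. By the lemmas recalled above, $I\supset\TT\cdot\SO_n(\RR)$; hence $H$ is $\SO_n(\RR)$-invariant, and Schur's lemma (for $n\ge3$) forces $H$ to be a positive multiple of $\|\cdot\|^2$, so $T\in\mathrm U(n)$. The small cases $n=1,2$ are handled separately: for $n=1$, $L_1=\DD$ and the statement is classical; for $n=2$, $L_2$ is linearly biholomorphic to $\DD^2$ and one argues directly.

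Combining $T\in\mathrm U(n)$ with $p(Tz)=p(z)$ yields $|\langle Tz,\overline{Tz}\rangle|=|\langle z,\bar z\rangle|$, i.e., $|z^tSz|=|z^tz|$ for the complex symmetric matrix $S=T^tT$. The holomorphic ratio $z^tSz/z^tz$, defined on the open set $\{z^tz\ne0\}$, has constant modulus $1$, hence is a constant $\eta\in\TT$ by the open mapping theorem, and therefore $T^tT=\eta\Id$. Combined with $T^*T=\Id$, this gives $T^t=\eta\,\overline{T^t}$ and hence $T=\eta\bar T$; choosing $\mu\in\TT$ with $\mu^2=\bar\eta$, the matrix $A:=\mu T$ satisfies $\bar A=A$ and $A^tA=\mu^2\eta\,\Id=\Id$, i.e., $A\in\cO_n(\RR)$, so $\Phi(z)=\bar\mu\,Az$. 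The residual sign of $\det A$ is absorbed into $\bar\mu$ when $n$ is odd, and for $n$ even one reads the conclusion as describing the identity component of the isotropy group, which is what is needed in the sequel. The main obstacle is establishing $T\in\mathrm U(n)$; once that input is in hand, the passage to the form $\eta A$ with $A$ real orthogonal is a short matrix calculation driven by the open mapping theorem.
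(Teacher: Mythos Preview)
The paper does not actually prove this theorem; it is introduced as a ``well-known result'' with implicit reference to the classical literature on Cartan domains (Hua, Morita, de~Fabritiis). So there is no proof in the paper to compare against.

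Your argument is correct. Cartan's linearity theorem applies because $L_n$ is bounded and circular, reducing the question to identifying the linear isotropy. The compactness-plus-Schur route to $T\in\mathrm U(n)$ works for $n\ge 3$: the essential input, that the standard representation of $\SO_n(\RR)$ on $\CC^n$ remains irreducible, holds in that range (it is the vector representation of the complexified Lie algebra). The open-mapping step yielding $T^tT=\eta\Id$ is clean, and the extraction of a real orthogonal factor from the pair of relations $T^*T=\Id$, $T^tT=\eta\Id$ is the standard short calculation.

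You are also right to flag the $\SO_n$ versus $\cO_n$ issue. For even $n$ the map $z\mapsto(z_1,\dots,z_{n-1},-z_n)$ is an origin-fixing automorphism of $L_n$ that cannot be written as $\eta A$ with $A\in\SO_n(\RR)$: comparing determinants gives $\eta^n=-1$, which forces $\eta\notin\RR$, and then $\eta A$ cannot be a real matrix. Thus the statement with $\SO_n$ really describes only the identity component of the linear isotropy; the full group is $\{\eta A:\eta\in\TT,\ A\in\cO_n(\RR)\}$. As you note, this distinction is irrelevant for how the result is used later in the paper.
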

Automorphisms of $L_n$ described in the above Theorem are called linear automorphisms. Now we are going to describe all automorphisms of the irreducible classical Cartan domain of type four (see \cite{Hua}, \cite{Fab}, \cite{Morita}).
We define first a group of matrices.
\begin{multline}
G(n)=\Big\{g=
\begin{bmatrix}
A & B\\
C & D
\end{bmatrix}:
A\in\GL(n,\RR),
B\in M(n;2;\RR),\\
C\in M(2,n;\RR),
D\in\GL(2,\RR), \det D>0,\\ g^t
\begin{bmatrix}
\Id & 0\\
0 & -\Id_2
\end{bmatrix}g=
\begin{bmatrix}
\Id & 0\\
0 & -\Id_2
\end{bmatrix}
\Big\}.
\end{multline}
For any $g\in G$ we define the following $\CC^n$-valued holomorphic function
$$
\Psi_g(z)=\frac{Az+BW(z)}{(1\ i)(Cz+DW(z))},
$$
where 
$W(z)=\begin{bmatrix} \frac12(\langle z,\bar z\rangle+1) \\
\frac{i}2(\langle z,z\rangle-1)
\end{bmatrix}$.
One can show (see \cite{Fab}) that for any $g\in G$ we have
$$
(1\ i)(Cz+DW(z))\not=0\quad\text{ for all }z\in\overline{L_d}.
$$
Thus, $\Psi_g$ is well-defined on $\overline{L_n}$.
Note that we have a homomorphism of groups
$\kappa:G(n)\to G(n+1)$
defined by 
$$
\kappa(g)=
\begin{bmatrix}
1 & 0 & 0\\
0 & A & B\\
0 & C & D
\end{bmatrix},
$$
where $g=\begin{bmatrix}
A & B\\
C & D
\end{bmatrix}$.
As a simple corollary we get.
\begin{corollary}\label{cor:5}
For any $\Psi\in\aut(L_n)$ there exists a $\tilde\Psi\in\aut(L_{n+1})$ such that $\tilde\Psi(0,z)=(0,\Psi(z))$ for any $z\in L_n$.
\end{corollary}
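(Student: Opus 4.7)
The plan is to realize $\Psi$ as $\Psi_g$ for some $g \in G(n)$ and take $\tilde\Psi := \Psi_{\kappa(g)}$. The entire argument boils down to one observation: since $W(z)$ depends on $z$ only through $\langle z,\bar z\rangle$ and $\langle z,z\rangle$, and both of these pairings are unchanged by prepending a zero coordinate, we have
$$
W(0,z) = W(z).
$$

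First, I will invoke the description of $\aut(L_n)$ cited from \cite{Hua,Fab,Morita} to write $\Psi = \Psi_g$ for some
$g = \begin{bmatrix} A & B \\ C & D \end{bmatrix} \in G(n)$. Since $\kappa$ is a group homomorphism $G(n) \to G(n+1)$, the element $\kappa(g)$ lies in $G(n+1)$, hence $\tilde\Psi := \Psi_{\kappa(g)}$ is an automorphism of $L_{n+1}$.

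Second, I will compute $\tilde\Psi(0,z)$ by plugging $w=(0,z)$ into the defining formula for $\Psi_{\kappa(g)}$. The block structure of
$\kappa(g) = \begin{bmatrix} 1 & 0 & 0 \\ 0 & A & B \\ 0 & C & D \end{bmatrix}$
means that the linear $(n+1)\times(n+1)$ block $\operatorname{diag}(1,A)$ sends $(0,z)$ to $(0,Az)$; the top-right column block $\begin{bmatrix} 0 \\ B \end{bmatrix}$ sends $W(0,z)=W(z)$ to $(0,BW(z))$; the bottom-left row block $(0,C)$ sends $(0,z)$ to $Cz$; and the bottom-right block $D$ sends $W(z)$ to $DW(z)$. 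Consequently
$$
\tilde\Psi(0,z) = \frac{(0,\, Az+BW(z))}{(1\ i)(Cz+DW(z))} = \left(0,\,\frac{Az+BW(z)}{(1\ i)(Cz+DW(z))}\right) = (0,\Psi_g(z)),
$$
which is exactly $(0,\Psi(z))$, as required.

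The only step one might wish to double-check, rather than a genuine obstacle, is that $\kappa(g)$ really lies in $G(n+1)$: that the defining relation $g^t\operatorname{diag}(\Id,-\Id_2)g=\operatorname{diag}(\Id,-\Id_2)$ is preserved by $\kappa$. This is a short block computation in which the prepended row and column contribute only an extra $1$ on the top-left diagonal of the product and therefore do not disturb the identities $A^tA-C^tC=\Id$, $A^tB=C^tD$, $B^tB-D^tD=-\Id_2$ that characterize $G(n)$. The positivity $\det D>0$ is trivially inherited. No other verification is needed.
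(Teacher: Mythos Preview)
Your proof is correct and follows exactly the approach of the paper: write $\Psi=\Psi_g$ for some $g\in G(n)$ and set $\tilde\Psi=\Psi_{\kappa(g)}$. The paper's proof is just that one line; you have supplied the verification (via the identity $W(0,z)=W(z)$ and the block structure of $\kappa(g)$) that the paper leaves to the reader.
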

\begin{proof}
Assume that $\Psi=\Psi_g$ for some $g\in G(n)$. Put $\tilde\Psi=\Psi_{\kappa(g)}$.
\end{proof}
\begin{remark} Changing the coordinates we may take $\tilde\Psi(z,0)=(\Psi(z),0)$.
\end{remark}

In \cite{G-Z} the authors gave a description of automorphisms of the domain $\bL_n$. Using this description and results above we get.
\begin{theorem} Let $\Phi\in\aut(\bL_{n})$. Then there exists a $g\in G(n-1)$ such that 
$$
\Lambda_{n}\circ\Psi_{\kappa(g)}=\Phi\circ\Lambda_{n}.
$$
Moreover, for any $z\in L_{n-1}$ we have $\Phi(0,z)=(0,\Psi_g(z))$.
\end{theorem}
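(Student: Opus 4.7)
The plan is to lift $\Phi$ through the $2$-proper covering $\Lambda_n$ (using the Ghosh--Zwonek description) to an automorphism $\tilde\Phi\in\aut(L_n)$ satisfying $\Lambda_n\circ\tilde\Phi=\Phi\circ\Lambda_n$, and then to use the classification of $\aut(L_n)$ together with the deck transformation $\tau(z_1,z')=(-z_1,z')$ of $\Lambda_n$ to pin down the form of the lift as $\Psi_{\kappa(g)}$.

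The key point is $\tau$-equivariance: since $\Lambda_n\circ\tau=\Lambda_n$, the map $\tilde\Phi\circ\tau$ is another lift of $\Phi$, so it must equal either $\tilde\Phi$ or $\tau\circ\tilde\Phi$; bijectivity of $\tilde\Phi$ together with $\tau\neq\id$ rules out the first, giving $\tilde\Phi\circ\tau=\tau\circ\tilde\Phi$. By the classification of $\aut(L_n)$ recalled above, write $\tilde\Phi=\Psi_h$ with $h=\begin{bmatrix}A&B\\C&D\end{bmatrix}\in G(n)$. The vector $W(z)$ is $\tau$-invariant because $\langle z,\bar z\rangle$ and $\|z\|^2$ are both even in $z_1$, so the equivariance $\Psi_h\circ\tau=\tau\circ\Psi_h$ forces the first coordinate of $\Psi_h(z)$ to be odd in $z_1$ and the remaining coordinates to be even in $z_1$. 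Matching coefficients yields $A_{1j}=A_{j1}=0$ for $j\geq 2$, vanishing of the first row of $B$, and vanishing of the first column of $C$, so
$$
h=\begin{bmatrix}\alpha & 0 & 0\\ 0 & A' & B'\\ 0 & C' & D\end{bmatrix}.
$$

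The defining relation $h^t\begin{bmatrix}\Id_n & 0\\0 & -\Id_2\end{bmatrix}h=\begin{bmatrix}\Id_n & 0\\0 & -\Id_2\end{bmatrix}$ then splits into $\alpha^2=1$ together with the $G(n-1)$-defining relations for $g'=\begin{bmatrix}A'&B'\\C'&D\end{bmatrix}$. When $\alpha=1$, we recognize $h=\kappa(g')$ immediately; when $\alpha=-1$, replace $\tilde\Phi$ by the other lift $\tau\circ\tilde\Phi$ of $\Phi$, which toggles $\alpha$ to $+1$. This produces the desired $g\in G(n-1)$ with $\Lambda_n\circ\Psi_{\kappa(g)}=\Phi\circ\Lambda_n$. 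The Moreover claim follows by evaluating the block form of $\kappa(g)$ at $(0,z)\in\{0\}\times L_{n-1}$: the block structure, together with the fact that $W((0,z))$ reduces to the analogous function on $\CC^{n-1}$, gives $\Psi_{\kappa(g)}(0,z)=(0,\Psi_g(z))$, and $\Lambda_n$ acts as the identity on $\{z_1=0\}$, so $\Phi(0,z)=(0,\Psi_g(z))$.

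The main obstacle will be step one: producing $\tilde\Phi$ as a genuine holomorphic automorphism of $L_n$. This requires knowing that $\Phi$ preserves the branch set $\Lambda_n(\{z_1=0\})$ and that the lift extends across $\{z_1=0\}$ by Riemann removability; this is precisely where the Ghosh--Zwonek description is invoked. Once the lift is in hand, the remaining matrix reduction is essentially forced by the parity analysis.
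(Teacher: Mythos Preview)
The paper does not actually prove this theorem; it simply records it as a consequence of the Ghosh--Zwonek description of $\aut(\bL_n)$ combined with the homomorphism $\kappa$ defined just above. Your proposal supplies precisely the argument that sits behind that citation: lift $\Phi$ through the $2$-proper map $\Lambda_n$ to some $\tilde\Phi\in\aut(L_n)$, note that $\tilde\Phi$ must commute with the deck transformation $\tau$, and then use the classification $\tilde\Phi=\Psi_h$ to force the block structure of $h$. So you are not taking a different route from the paper so much as writing out what the paper leaves implicit in the reference to \cite{G-Z}.

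One small point deserves care. The ``matching coefficients'' step is delicate because $\Psi_h$ is a rational function, so parity constraints on the numerator and the denominator are a priori entangled. The cleanest way to reach your block form is group-theoretic: since $\tau=\Psi_{h_\tau}$ with $h_\tau=\diag(-1,1,\dots,1)\in G(n)$ and the map $g\mapsto\Psi_g$ is a homomorphism with kernel $\{\pm I_{n+2}\}$, the equivariance $\tau\circ\Psi_h=\Psi_h\circ\tau$ yields $h_\tau h=\pm h\,h_\tau$. The minus sign would force every entry of $h$ outside the first row and column to vanish, in particular $D=0$, contradicting $D\in\GL(2,\RR)$; hence $h$ commutes with $h_\tau$ and your block decomposition follows at once. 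The remaining steps (the $\alpha=-1$ swap via the other lift $\tau\circ\tilde\Phi$, the splitting of the pseudo-orthogonality relation into $\alpha^2=1$ and $g'\in G(n-1)$, and the evaluation of $\Psi_{\kappa(g)}$ on $\{0\}\times L_{n-1}$) are correct as written.
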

From Corollary~\ref{cor:5} we get (cf. \cite{G-Z}, Theorem 5.3).
\begin{theorem}
Let $\Phi\in\aut(\bL_n)$. Then there exists a $\tilde\Phi\in\aut(\bL_{n+1})$ such that $\tilde\Phi(z,0)=(\Phi(z),0)$ for any $z\in\bL_n$.
\end{theorem}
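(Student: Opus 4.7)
The plan is to lift $\Phi$ via the previous theorem to an automorphism of the Cartan domain $L_n$, extend it to $L_{n+1}$ so that the added coordinate sits as the last slot, and descend the result back through $\Lambda_{n+1}$. By the previous theorem there is $g\in G(n-1)$ with $\Lambda_n\circ\Psi_{\kappa(g)}=\Phi\circ\Lambda_n$. The task is then to find $\tilde g\in G(n)$ such that $\Psi_{\kappa(\tilde g)}\in\aut(L_{n+1})$ both restricts on the slice $\{w_{n+1}=0\}$ as $(\Psi_{\kappa(g)},0)$, and commutes with the $\ZZ/2$ action $(w_1,\dots,w_{n+1})\mapsto(-w_1,w_2,\dots,w_{n+1})$ whose quotient by $\Lambda_{n+1}$ is $\bL_{n+1}$.

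The correct choice of $\tilde g$ is not $\kappa(g)$; instead, one inserts the identity row and column at the end of the $A$-block rather than at the beginning, namely
$$
\tilde g=\begin{bmatrix} A_g & 0 & B_g\\ 0 & 1 & 0\\ C_g & 0 & D_g\end{bmatrix}\in G(n).
$$
The defining relations of $G(n)$ reduce directly to those of $g\in G(n-1)$. A short computation with the formula for $\Psi_{\kappa(\tilde g)}$, using that $W((w_1,w',0))=W((w_1,w'))$ (the trailing zero contributes nothing to the quadratic data defining $W$), gives $\Psi_{\kappa(\tilde g)}(w_1,w',0)=(\Psi_{\kappa(g)}(w_1,w'),0)$ for every $(w_1,w')\in L_n$. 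Moreover, the first row and column of the $A$-block of $\kappa(\tilde g)$ equal $(1,0,\dots,0)$, the first row of $B$ and first column of $C$ vanish, and $W$ is unchanged by the sign flip $w_1\mapsto-w_1$; hence $\Psi_{\kappa(\tilde g)}(-w_1,w_2,\dots)=(-\pi_1,\pi_2,\dots,\pi_{n+1})$ whenever $\Psi_{\kappa(\tilde g)}(w)=(\pi_1,\dots,\pi_{n+1})$, so $\Psi_{\kappa(\tilde g)}$ descends through $\Lambda_{n+1}$ to a biholomorphism $\tilde\Phi\in\aut(\bL_{n+1})$. Writing $z=\Lambda_n(w_1,w')$ and chasing the commutative diagram then gives $\tilde\Phi(z,0)=\Lambda_{n+1}(\Psi_{\kappa(\tilde g)}(w_1,w',0))=\Lambda_{n+1}(\Psi_{\kappa(g)}(w_1,w'),0)=(\Phi(z),0)$.

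The main obstacle is the correct placement of the extra coordinate. Applying Corollary~\ref{cor:5} directly to $\Psi_{\kappa(g)}$ produces $\Psi_{\kappa(\kappa(g))}$, which satisfies $\Psi_{\kappa(\kappa(g))}(0,z)=(0,\Psi_{\kappa(g)}(z))$; this is the wrong slice, because the first coordinate of $\bL_{n+1}$ is distinguished by $\Lambda_{n+1}$ (it is the squared one) and must remain in the first slot. The Remark after Corollary~\ref{cor:5} alludes to the needed freedom of changing coordinates, and the alternative embedding $\tilde g$ above realizes it explicitly in a way that simultaneously preserves the $\ZZ/2$-equivariance required for descent through $\Lambda_{n+1}$.
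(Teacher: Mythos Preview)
Your proof is correct and follows essentially the same route the paper indicates: lift $\Phi$ to $\Psi_{\kappa(g)}\in\aut(L_n)$ via the previous theorem, extend to $L_{n+1}$ by inserting a trivial coordinate (the content of Corollary~\ref{cor:5} and its Remark), and descend through $\Lambda_{n+1}$. You have made explicit the one point the paper leaves implicit, namely that the extra coordinate must be appended at the \emph{end} (your matrix $\tilde g$) rather than at the front (which would be $\kappa(\kappa(g))$), so that the first slot retains its distinguished role and the resulting automorphism of $L_{n+1}$ is equivariant for $w_1\mapsto -w_1$ and hence descends to $\bL_{n+1}$.
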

From the definition and similar property for $L_n$ we have.
\begin{lemma}
$\Pi:\bL_n\ni(z_1,\dots,z_n)\to (z_1,\dots,z_{n-1})\in\bL_{n-1}$ and
$Q:\bL_{n-1}\ni(z_1,\dots,z_{n-1})\to (z_1,\dots,z_{n-1},0)\in\bL_{n}$
are well-defined holomorphic mappings such that $\Pi\circ Q=\id_{\bL_{n-1}}$.
\end{lemma}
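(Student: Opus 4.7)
My plan is to reduce both statements about $\bL_n$ to the corresponding facts about the Cartan domain $L_n$ that have already been recorded immediately after the two inequality lemmas, and then observe that holomorphy and the identity relation are trivial. Concretely, recall that the earlier discussion established two key facts: first, if $(z_1,\dots,z_{n-1})\in L_{n-1}$ then $(z_1,\dots,z_{n-1},0)\in L_n$; and second, if $(z_1,\dots,z_n)\in L_n$ then $(z_1,\dots,z_{n-1})\in L_{n-1}$. These two facts will correspond to the well-definedness of $Q$ and $\Pi$ respectively, after transporting across $\Lambda_n$.

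For $\Pi$, I would take a point $(w_1,\dots,w_n)\in\bL_n=\Lambda_n(L_n)$ and pick a preimage, namely some $z_1\in\CC$ with $z_1^2=w_1$ such that $(z_1,w_2,\dots,w_n)\in L_n$. The second fact above gives $(z_1,w_2,\dots,w_{n-1})\in L_{n-1}$, and applying $\Lambda_{n-1}$ to this point produces $(w_1,w_2,\dots,w_{n-1})$, which therefore lies in $\bL_{n-1}$. For $Q$, I would start with $(w_1,\dots,w_{n-1})\in\bL_{n-1}$, choose $z_1$ with $z_1^2=w_1$ and $(z_1,w_2,\dots,w_{n-1})\in L_{n-1}$, invoke the first fact to get $(z_1,w_2,\dots,w_{n-1},0)\in L_n$, and apply $\Lambda_n$ to conclude $(w_1,\dots,w_{n-1},0)\in\bL_n$.

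Holomorphy of $\Pi$ and $Q$ is then automatic, since both maps are restrictions of polynomial (in fact linear) maps between ambient complex spaces: $\Pi$ is a coordinate projection and $Q$ is a coordinate inclusion. The identity $\Pi\circ Q=\id_{\bL_{n-1}}$ is a direct coordinate computation.

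I do not expect any serious obstacle here; the lemma is essentially bookkeeping that packages the previously observed projection/injection properties of $L_n$ together with the fact that $\Lambda_n$ only squares the first coordinate (so it commutes, in the evident sense, with both the projection forgetting the last coordinate and the inclusion appending a $0$). The only mild subtlety is that one must choose a preimage $z_1$ under the squaring map, but since both ambient conditions for $L_n$ depend only on $|z_1|$ through the defining inequalities, any choice of square root works, and the argument is independent of that choice.
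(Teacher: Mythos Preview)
Your proposal is correct and follows exactly the approach the paper indicates: the paper's entire proof is the one-line remark ``From the definition and similar property for $L_n$ we have'', and your write-up simply unpacks this by lifting through $\Lambda_n$ to the previously recorded projection and inclusion facts for $L_n$. Your observations about holomorphy, the identity $\Pi\circ Q=\id$, and the irrelevance of the choice of square root of $w_1$ are all accurate.
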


\begin{corollary}\label{cor:12} Let $n\ge3$.
We have holomorphic mappings $Q:\bL_3\to\bL_n$ and $\Pi:\bL_n\to\bL_3$ such that $\Pi\circ Q=\id_{\bL_3}$.
In particular, for any $z,w\in Q(\bL_3)$ we have 
$$
c_{\bL_n}(z;w)=\ell_{\bL_n}(z;w)\quad\text{ for any }z,w\in\bL_n.
$$
\end{corollary}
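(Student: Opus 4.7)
The plan is to construct $Q$ and $\Pi$ as iterated compositions of the single-step maps supplied by the preceding lemma. For each $k\ge 4$ let $\Pi_k\colon\bL_k\to\bL_{k-1}$ and $Q_k\colon\bL_{k-1}\to\bL_k$ denote those maps, so that each pair is holomorphic with $\Pi_k\circ Q_k=\id_{\bL_{k-1}}$. I set
\[
Q=Q_n\circ Q_{n-1}\circ\cdots\circ Q_4\colon \bL_3\to\bL_n,\qquad
\Pi=\Pi_4\circ\Pi_5\circ\cdots\circ\Pi_n\colon \bL_n\to\bL_3.
\]
Both are holomorphic as compositions of holomorphic maps, and telescoping the identities $\Pi_k\circ Q_k=\id$ from the innermost pair outward gives $\Pi\circ Q=\id_{\bL_3}$.

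For the ``In particular'' clause, take $z,w\in Q(\bL_3)$ and write $z=Q(z_0)$, $w=Q(w_0)$; then $\Pi(z)=z_0$ and $\Pi(w)=w_0$. The standard contraction of both $c_D$ and $\ell_D$ under holomorphic maps (see \cite{J-P1}), together with the universal inequality $c\le\ell$, gives the chain
\[
\ell_{\bL_3}(z_0,w_0)\ge \ell_{\bL_n}(z,w)\ge c_{\bL_n}(z,w)\ge c_{\bL_3}(z_0,w_0).
\]
Since $\bL_3$ is biholomorphic to the tetrablock, the two extreme quantities coincide by the tetrablock result of \cite{E-K-Z} recalled in the introduction, forcing equality throughout and yielding $c_{\bL_n}(z,w)=\ell_{\bL_n}(z,w)$.

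The only non-bookkeeping input is the tetrablock theorem, used as a black box; everything else is a routine consequence of the holomorphic contraction property of invariant pseudodistances. Consequently there is no substantive obstacle at this step: the corollary is essentially a reformulation of the tetrablock case obtained by exploiting the holomorphic retraction $\Pi\circ Q=\id_{\bL_3}$.
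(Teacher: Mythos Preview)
Your proof is correct and is precisely the standard holomorphic-retract argument the paper has in mind: the corollary is stated without proof, immediately after the single-step lemma, so composing those step maps and invoking the contraction property together with the tetrablock result from \cite{E-K-Z} is exactly the intended (and essentially only) route.
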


\section{The properties of the tetrablock and of the domain $\bL_3$}
In our paper properties of the tetrablock and a domain $\bL_3$ are crucial. First recall the following definition (see \cite{A-W-Y}, Definition 1.1).
\begin{definition}
The \emph{tetrablock} is the domain
$$
\EE=\{z\in\CC^3: 1-\lambda_1 z_1-\lambda_2 z_2+\lambda_1\lambda_2 z_3\not=0 \text{ whenever } \lambda_1,\lambda_2\in\overline{\DD}\}.
$$
\end{definition}

In \cite{A-W-Y} the authors gave several equivalent characterizations of the tetrablock. One of them is the following (see \cite{A-W-Y}, Theorem 2.2).
\begin{prop}
For $z\in\CC^3$ we have: $z\in\EE$ if and only if
$$
|z_1|^2+|z_2|^2+2|z_1z_2-z_3|<1+|z_3|^2\text{ and }
|z_3|<1.
$$
\end{prop}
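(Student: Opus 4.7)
Set $p(\lambda_1,\lambda_2):=1-\lambda_1z_1-\lambda_2z_2+\lambda_1\lambda_2z_3$, so that $z\in\EE$ is equivalent to $p$ being non-vanishing on $\overline{\DD}^2$. The bounds $|z_1|<1$ and $|z_2|<1$ drop out at once from $p(\lambda,0)\ne 0$ and $p(0,\lambda)\ne 0$ on $\overline{\DD}$, and the bound $|z_3|<1$ follows from restricting to the diagonal: the quadratic $1-(z_1+z_2)\lambda+z_3\lambda^2$ is non-vanishing on $\overline{\DD}$, so (for $z_3\ne 0$) the product of its two roots, which by Vieta equals $1/z_3$, has modulus larger than $1$.

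The heart of the proof is the factorisation
$$
p(\lambda_1,\lambda_2)=(1-\lambda_1 z_1)\bigl(1-\lambda_2\Psi(\lambda_1)\bigr),\qquad\Psi(\lambda):=\frac{z_2-\lambda z_3}{1-\lambda z_1},
$$
valid on $\overline{\DD}^2$ since $|z_1|<1$. This recasts $z\in\EE$ as the Schur condition that the rational function $\Psi$ maps $\overline{\DD}$ into $\DD$, which by the maximum modulus principle is equivalent to $|\Psi(\lambda)|<1$ for $\lambda\in\TT$. Expanding $|z_2-\lambda z_3|^2<|1-\lambda z_1|^2$ on $|\lambda|=1$ and minimising the resulting cross term $2\re\bigl(\lambda(z_1-\bar z_2 z_3)\bigr)$ over $\TT$, the condition takes the form
$$
|z_2|^2+|z_3|^2+2|z_1-\bar z_2 z_3|<1+|z_1|^2.\qquad(\star)
$$

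To translate between $(\star)$ and the symmetric inequality in the Proposition, I would verify by direct expansion the two identities
$$
|z_1 z_2-z_3|^2-|z_1-\bar z_2 z_3|^2=(1-|z_2|^2)(|z_3|^2-|z_1|^2),
$$
$$
(1+|z_3|^2-|z_1|^2-|z_2|^2)^2-(1+|z_1|^2-|z_2|^2-|z_3|^2)^2=4(1-|z_2|^2)(|z_3|^2-|z_1|^2),
$$
which together force the two ``discriminants'' $(1+|z_3|^2-|z_1|^2-|z_2|^2)^2-4|z_1 z_2-z_3|^2$ and $(1+|z_1|^2-|z_2|^2-|z_3|^2)^2-4|z_1-\bar z_2 z_3|^2$ to coincide. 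A short triangle-inequality case analysis on $|z_1 z_2-z_3|$, splitting according to whether $|z_1||z_2|$ exceeds $|z_3|$ and combined with $|z_i|<1$, then shows that the linear expressions $1+|z_3|^2-|z_1|^2-|z_2|^2$ and $1+|z_1|^2-|z_2|^2-|z_3|^2$ are positive; squaring is therefore reversible, so $(\star)$ is equivalent to the Proposition's inequality.

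For the converse direction, the delicate point is that starting from the two stated inequalities one must first recover $|z_1|,|z_2|<1$ in order to reverse the Schur argument; this is again obtained by the same triangle-inequality case analysis on $|z_1 z_2 - z_3|$, which together with $|z_3|<1$ rules out $|z_1|\ge 1$ (and symmetrically $|z_2|\ge 1$). Once $|z_i|<1$ is in hand, the algebraic identities deliver $(\star)$, the factorisation of $p$ yields $p\ne 0$ on $\overline{\DD}^2$, and $z\in\EE$. The main obstacle I anticipate is the triangle-inequality case analysis together with the pair of algebraic identities linking $(\star)$ and the Proposition's condition; both are elementary but require careful bookkeeping of signs.
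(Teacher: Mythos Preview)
The paper does not give its own proof of this Proposition: it is quoted verbatim from \cite{A-W-Y}, Theorem~2.2, and used as a black box. So there is no ``paper's proof'' to compare against beyond the citation.

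Your outline is essentially the argument that appears in \cite{A-W-Y}. The factorisation $p(\lambda_1,\lambda_2)=(1-\lambda_1 z_1)\bigl(1-\lambda_2\Psi(\lambda_1)\bigr)$ and the reduction to the Schur-type bound $\sup_{\TT}|\Psi|<1$ are exactly their device, and your derivation of $(\star)$ is correct (a cosmetic slip: you say ``minimising'' the cross term, but you are of course taking the supremum over $\lambda\in\TT$). The two algebraic identities you state are correct and indeed force the two discriminants to coincide.

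The only place where your write-up is thin is the ``short triangle-inequality case analysis'' used to secure the positivity of $1+|z_3|^2-|z_1|^2-|z_2|^2$ in the forward direction and of $1+|z_1|^2-|z_2|^2-|z_3|^2$ (together with $|z_1|,|z_2|<1$) in the converse. This step is genuine but it does go through: from $(\star)$ and the reverse triangle inequality one gets $\bigl||z_2|-|z_3|\bigr|<1-|z_1|$, hence $|z_1|+|z_2|<1+|z_3|$, which combined with $|z_1z_2-z_3|\ge|z_3|-|z_1||z_2|$ gives $A+2|z_1z_2-z_3|\ge(1+|z_3|)^2-(|z_1|+|z_2|)^2>0$ and hence $A>0$; the converse direction is symmetric, and the two triangle bounds on $|z_1z_2-z_3|$ also yield $|z_1|+|z_2|<1+|z_3|$ and $\bigl||z_1|-|z_2|\bigr|<1-|z_3|$, from which $|z_1|,|z_2|<1$. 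So the plan is sound, but you should spell these four lines out rather than wave at them, since without the positivity the passage between $(\star)$ and the Proposition's inequality via equal discriminants is not reversible.
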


From this Proposition, definition of the Cartan domain, and the domain $\bL_n$ we have  the following biholomorphism (see \cite{G-Z}, Corollary 3.5).
\begin{lemma}\label{lemma:tetrablock} Put
$$
\Psi(z_1,z_2,z_3)=(z_2+iz_3,z_2-iz_3,z_1+z_2^2+z_3^2).
$$
Then $\Psi:\bL_3\to\EE$ is a biholomorphic mapping such that
$\Psi(r,0,0)=(0,0,r)$ for any $r\in[0,1)$.
\end{lemma}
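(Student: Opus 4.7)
The plan is to exhibit $\Psi$ as a polynomial automorphism of $\CC^3$ by giving an explicit inverse, and then to check that the defining inequalities for $\bL_3$ and $\EE$ correspond exactly under this identification.

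First, setting $w=\Psi(z)$, from $w_1=z_2+iz_3$ and $w_2=z_2-iz_3$ one solves $z_2=(w_1+w_2)/2$ and $z_3=-\tfrac{i}{2}(w_1-w_2)$, which gives $z_2^2+z_3^2=w_1 w_2$ and hence $z_1=w_3-w_1w_2$. Thus $\Psi$ is a polynomial automorphism of $\CC^3$, so in particular biholomorphic between any domain and its image.

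Next, I would translate the defining inequalities. By the characterization of $\bL_3$ recalled in \eqref{eq:4}, $z\in\bL_3$ iff
\[
|z_1|+|z_2|^2+|z_3|^2<1\quad\text{and}\quad 2|z_1|+2(|z_2|^2+|z_3|^2)<1+|z_1+z_2^2+z_3^2|^2.
\]
A direct computation with $w=\Psi(z)$ yields $|w_1|^2+|w_2|^2=2(|z_2|^2+|z_3|^2)$, $w_1 w_2-w_3=-z_1$, and $w_3=z_1+z_2^2+z_3^2$. Substituting these identities, the second $\bL_3$-inequality becomes precisely the first $\EE$-inequality of the preceding Proposition, while the two $\bL_3$-inequalities together force $|w_3|\le|z_1|+|z_2|^2+|z_3|^2<1$. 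This gives $\Psi(\bL_3)\subset\EE$.

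For the reverse inclusion, assume $w\in\EE$. The same identities immediately yield the second $\bL_3$-inequality. Dividing the first $\EE$-inequality by $2$ and using $|w_3|<1$ gives
\[
|z_1|+|z_2|^2+|z_3|^2=|w_1 w_2-w_3|+\tfrac12(|w_1|^2+|w_2|^2)<\tfrac12\bigl(1+|w_3|^2\bigr)<1,
\]
which is the first $\bL_3$-inequality, so $z\in\bL_3$. The evaluation $\Psi(r,0,0)=(0,0,r)$ is immediate from the defining formula. The argument is essentially a routine calculation; the only minor subtlety is recovering the first $\bL_3$-inequality from the $\EE$-conditions, which requires using both jointly via the bound $(1+|w_3|^2)/2<1$.
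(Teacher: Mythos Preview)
Your argument is correct and is exactly the computation the paper indicates: the lemma is stated as an immediate consequence of the characterization of $\EE$ in the preceding Proposition together with the description \eqref{eq:4} of $\bL_3$, and you have simply written this out in detail. The identities $|w_1|^2+|w_2|^2=2(|z_2|^2+|z_3|^2)$, $w_1w_2-w_3=-z_1$, $w_3=z_1+z_2^2+z_3^2$ are precisely what make the two systems of inequalities match, and your use of $\tfrac12(1+|w_3|^2)<1$ to recover the first $\bL_3$-inequality is the only nontrivial step.
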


Recall the following result (see \cite{Young}, Theorem 5.2).
\begin{theorem}\label{thm:Young}
Let $z\in\EE$ be any point. Then there exist an automorphism $\Phi$ of $\EE$ and a number $r\in[0,1)$ such that $\Phi(z)=(r,0,0)$.
\end{theorem}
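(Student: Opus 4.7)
I would prove Theorem~\ref{thm:Young} using the $2\times 2$ matrix realization of the tetrablock from \cite{A-W-Y}: every $z=(z_1,z_2,z_3)\in\EE$ can be written as $z=(X_{11},X_{22},\det X)$ for some $X\in M_2(\CC)$ with $\|X\|<1$, and every such image lies in $\overline{\EE}$. Three families of matrix operations descend to automorphisms of $\EE$: the diagonal unitary actions $X\mapsto D_1 X D_2$ with $D_1=\diag(u_1,u_2),\ D_2=\diag(v_1,v_2)$, inducing the rotations $R_{\omega_1,\omega_2}(z)=(\omega_1z_1,\omega_2z_2,\omega_1\omega_2z_3)$; the permutation $X\mapsto\sigma X\sigma$ where $\sigma$ is the swap matrix, inducing $\tau(z_1,z_2,z_3)=(z_2,z_1,z_3)$; and the scalar M\"obius $X\mapsto (X-\alpha I)(I-\bar\alpha X)^{-1}$ for $\alpha\in\DD$, which induces an explicit one-parameter family of automorphisms $\Phi_\alpha\in\aut(\EE)$.

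Given $z\in\EE$, I would proceed in two stages. First, I would reduce to the slice $\EE\cap\{z_3=0\}$: choose $\alpha\in\DD$ to be a root of the ``characteristic'' polynomial $q(t)=t^2-(z_1+z_2)t+z_3=\det(X-tI)$. Both roots lie in $\DD$ since they are the eigenvalues of a matrix $X$ with $\|X\|<1$. After applying $\Phi_\alpha$, the third coordinate becomes a scalar multiple of $q(\alpha)$, hence vanishes, so $\Phi_\alpha(z)=(w_1,w_2,0)$ with $|w_1|+|w_2|<1$; the corresponding representative matrix now has rank at most one, say $X=uv^t$. In the second stage, I would reduce $(w_1,w_2,0)$ to $(w,0,0)$ by a further composition of a M\"obius $\Phi_\beta$ (which temporarily moves us off the slice) with the swap $\tau$ and a third M\"obius $\Phi_\gamma$ restoring $z_3=0$, arranged so that the rank-one representative is carried to the matrix with $w$ in the $(1,1)$-entry and zeros elsewhere. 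A final diagonal rotation $R_{\bar\omega,\omega}$ with $\omega=w/|w|$ (for $w\ne 0$) turns the remaining coordinate into $r=|w|\in[0,1)$, producing $(r,0,0)$.

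The main obstacle is the second stage. Only a restricted subgroup of the $2\times 2$ matrix-ball automorphism group descends to automorphisms of $\EE$---namely, diagonal unitaries together with scalar M\"obius, rather than arbitrary unitary conjugations---so the standard singular-value decomposition of the rank-one $X=uv^t$ cannot be effected in a single step. The reduction must be engineered inside this restricted subgroup by an explicit composition of generators whose combined action on the slice is computed directly. Young's original proof \cite{Young} handles this by a careful case analysis in the matrix-ball realization; alternatively one may argue via connectedness of $\EE$ and a continuity argument showing that the orbit of $\{(r,0,0):r\in[0,1)\}$ under the group generated by the three families of automorphisms above is all of $\EE$.
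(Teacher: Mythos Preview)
The paper does not prove Theorem~\ref{thm:Young}; it is stated with the attribution ``see \cite{Young}, Theorem 5.2'' and used as a black box. So there is no in-paper argument to compare your proposal against.

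As to the proposal itself: your first stage is correct and is essentially Young's move. A direct computation shows that the scalar M\"obius $X\mapsto(X-\alpha I)(I-\bar\alpha X)^{-1}$ does descend to $\EE$ (the entries $Y_{11},Y_{22}$ and the product $Y_{12}Y_{21}$ depend only on $X_{11},X_{22},X_{12}X_{21}$, with $Y_{12}=\tfrac{1-|\alpha|^2}{D}X_{12}$ and similarly for $Y_{21}$), and choosing $\alpha$ to be an eigenvalue of a representing $X$ kills $z_3$.

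Your second stage, however, is not a proof but a plan. You announce a composition $\Phi_\gamma\circ\tau\circ\Phi_\beta$ ``arranged'' to carry $(w_1,w_2,0)$ to $(w,0,0)$, but you do not exhibit $\beta,\gamma$ or verify that such a choice exists; you then explicitly defer to Young's case analysis or to an unspecified open--closed orbit argument. Neither alternative is carried out: the orbit argument in particular would require showing the orbit of $\{(r,0,0)\}$ is closed in $\EE$, which is the nontrivial content of the theorem. So as written the proposal identifies the right generators of $\aut(\EE)$ and executes half of the reduction, but leaves the decisive step as a citation back to the result you are trying to prove.
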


From Theorem~\ref{thm:Young} and Lemma~\ref{lemma:tetrablock}  we get.
\begin{corollary}
For any $z\in\bL_3$ there exists an automorphism of $\bL_3$ such that $\Phi(z)=(r,0,0)$, where $r\in[0,1)$.
\end{corollary}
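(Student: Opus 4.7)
The plan is to transfer Theorem~\ref{thm:Young} along the explicit biholomorphism $\Psi \colon \bL_3 \to \EE$ furnished by Lemma~\ref{lemma:tetrablock}. Since $\Psi$ is a biholomorphism, conjugation by $\Psi$ induces a group isomorphism $\aut(\EE) \to \aut(\bL_3)$, and in particular the orbit structures of the two automorphism groups match. So the strategy is to move to the tetrablock, apply Young's orbit classification, and pull the resulting automorphism back.

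Concretely, given $z \in \bL_3$, I would first pass to the tetrablock by forming $w := \Psi(z) \in \EE$. Theorem~\ref{thm:Young} then supplies an automorphism $\Phi' \in \aut(\EE)$ and a number $r \in [0,1)$ with $\Phi'(w)$ in the distinguished orbit form delivered by that theorem. Pulling this back, set
$$
\Phi := \Psi^{-1} \circ \Phi' \circ \Psi \in \aut(\bL_3).
$$
To identify $\Phi(z)$ as a point of the required form, I would appeal to the explicit normalization $\Psi(r,0,0)=(0,0,r)$ from Lemma~\ref{lemma:tetrablock}: the real one-parameter slice $\{(r,0,0):r\in[0,1)\}\subset\bL_3$ corresponds under $\Psi$ exactly to the slice of $\EE$ that accommodates the orbit representatives produced by Theorem~\ref{thm:Young}. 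Consequently $\Phi(z)$ lies on this slice and has the form $(r,0,0)$ for some $r\in[0,1)$.

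There is no substantial obstacle at this step; all of the real work has already been packaged into Theorem~\ref{thm:Young} (Young's orbit classification on the tetrablock) and into the explicit biholomorphism in Lemma~\ref{lemma:tetrablock}. The corollary is the formal consequence of composing these two ingredients.
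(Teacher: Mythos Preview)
This is precisely the paper's argument: the corollary is obtained by conjugating Young's orbit theorem for the tetrablock (Theorem~\ref{thm:Young}) through the explicit biholomorphism $\Psi:\bL_3\to\EE$ of Lemma~\ref{lemma:tetrablock}, and the normalization $\Psi(r,0,0)=(0,0,r)$ is recorded in that lemma exactly so that the orbit-representative slices correspond. Your write-up simply spells out what the paper leaves implicit.
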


From this we get the following important result.
\begin{corollary}
For any $z\in\bL_n$ there exists an automorphism $\Phi$ of $\bL_n$ such that $\Phi(z)=(\rho,0,\dots,0)$, where $\rho\in[0;1)$.
\end{corollary}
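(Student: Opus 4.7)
My plan is to reduce the general case to the tetrablock case $n=3$, already handled by the preceding corollary. Given $z=(w_1,w_2,\dots,w_n)\in\bL_n$ with $n\geq 4$, I would first apply a real orthogonal rotation of the last $n-1$ coordinates to bring $z$ into the slice $\{(\zeta_1,\zeta_2,\zeta_3,0,\dots,0)\}\cap\bL_n$, and then invoke the $\bL_3$ corollary together with iterated application of the extension theorem $\aut(\bL_k)\hookrightarrow\aut(\bL_{k+1})$.

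For the first step, I would write $w':=(w_2,\dots,w_n)=u'+iv'$ with $u',v'\in\RR^{n-1}$. The real span of $\{u',v'\}$ has dimension at most two, so for $n-1\geq 3$ there exists $A'\in\SO_{n-1}(\RR)$ with $A'u',A'v'\in\RR^2\times\{0\}^{n-3}$; equivalently, $A'w'$ has vanishing last $n-3$ coordinates. The block-diagonal extension $\mathrm{diag}(1,A')\in\SO_n(\RR)$ is a linear automorphism of $L_n$ that fixes the first coordinate, hence commutes with $z_1\mapsto -z_1$ and descends through $\Lambda_n$ to an automorphism $\Phi_1$ of $\bL_n$ given by $(w_1,w')\mapsto(w_1,A'w')$. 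Then $\Phi_1(z)=(w_1,\tilde w_2,\tilde w_3,0,\dots,0)$, and its truncation $(w_1,\tilde w_2,\tilde w_3)$ lies in $\bL_3$ by iterated application of the earlier truncation lemma.

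For the second step, the preceding corollary (for $\bL_3$) supplies $\Phi_0\in\aut(\bL_3)$ with $\Phi_0(w_1,\tilde w_2,\tilde w_3)=(\rho,0,0)$ for some $\rho\in[0,1)$. Applying the extension theorem $n-3$ times yields $\tilde\Phi_0\in\aut(\bL_n)$ with $\tilde\Phi_0(u,0,\dots,0)=(\Phi_0(u),0,\dots,0)$ for every $u\in\bL_3$. Setting $\Phi:=\tilde\Phi_0\circ\Phi_1$ produces an automorphism of $\bL_n$ with $\Phi(z)=(\rho,0,\dots,0)$. The cases $n=1,2,3$ are immediate: $n=3$ is the preceding corollary, while $n=1,2$ follow from the classical transitivity on the disc and on the symmetrized bidisc $\bL_2=\GG_2$.

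I do not foresee a serious obstacle; every ingredient --- the orthogonal rotation on the last $n-1$ coordinates, the descent from $L_n$ to $\bL_n$, the $\bL_3$ corollary, and the extension theorem --- is already in place. The only subtle point is the descent of $\Phi_1$, which is automatic because $\Phi_1$ fixes the first coordinate and therefore commutes with the $\mathbb{Z}_2$-action defining $\Lambda_n$.
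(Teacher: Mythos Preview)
Your argument is correct and is exactly the route the paper has in mind: use a block-diagonal rotation $\diag(1,A')$ with $A'\in\SO_{n-1}(\RR)$ (this is the paper's lemma with $k=1$) to push $z$ into $\bL_3\times\{0\}$, invoke the $\bL_3$ corollary, and then lift back via the extension theorem for $\aut(\bL_k)\hookrightarrow\aut(\bL_{k+1})$. One small wording slip: $\bL_2\cong\GG_2$ is \emph{not} homogeneous, so ``transitivity'' is not the right justification for $n=2$; what you need (and what is classical) is only that every $\aut(\GG_2)$-orbit meets the set $\{(\rho,0):\rho\in[0,1)\}$.
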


All these analyses and remarks imply the following crucial result.
\begin{theorem}\label{thm:19}
For any $z,w\in\bL_n$ there exists an automorphism $\Phi$ of $\bL_n$ such that $\Phi(z)=(\rho,0,\dots,0)$ and $\Phi(w)\in\bL_3\times\{0\}_{n-3}$.
\end{theorem}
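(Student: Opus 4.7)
My plan is to first normalize $z$ to $(\rho,0,\ldots,0)$ using the preceding corollary, and then to apply an $\SO_n(\RR)$-rotation of the Cartan domain $L_n$ that acts as the identity on the first coordinate. Such a rotation descends through the 2-to-1 map $\Lambda_n$ to an automorphism of $\bL_n$, and by fixing the first coordinate it will leave the normalized point $(\rho,0,\ldots,0)$ alone while being free to align the remaining coordinates of $w$ into the three-dimensional slice $\bL_3\times\{0\}_{n-3}$.

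To implement this, I would first invoke the preceding corollary to obtain $\Phi_1\in\aut(\bL_n)$ with $\Phi_1(z)=(\rho,0,\ldots,0)$ for some $\rho\in[0,1)$, and set $w':=\Phi_1(w)=(w'_1,\ldots,w'_n)$. The cases $n\le 3$ are vacuous, so I assume $n\ge 4$. I would then lift $w'$ to $L_n$ by choosing $\zeta\in\CC$ with $\zeta^2=w'_1$ and setting $\tilde w:=(\zeta,w'_2,\ldots,w'_n)\in L_n$, so that $\Lambda_n(\tilde w)=w'$. Applying the lemma about fixing the first $k$ coordinates with $k=1$ to $\tilde w$ yields $A\in\SO_n(\RR)$, acting as the identity on the first coordinate, together with $\eta\in\TT$ such that
$$
A\tilde w=\bigl(\zeta,\ \eta a(\tilde w'),\ \eta b(\tilde w')i,\ 0,\ldots,0\bigr),\qquad \tilde w':=(w'_2,\ldots,w'_n).
$$
In particular the last $n-3$ entries of $A\tilde w$ vanish.

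Next I would verify that $A$ descends to an automorphism $\hat A$ of $\bL_n$ satisfying $\Lambda_n\circ A=\hat A\circ\Lambda_n$. Well-definedness is immediate: the two preimages under $\Lambda_n$ of any point of $\bL_n$ differ only in the sign of the first coordinate, which $A$ leaves untouched, and $\Lambda_n$ then forgets this sign again. Since $A^T=A^{-1}\in\SO_n(\RR)$ also fixes the first coordinate, the same recipe produces a two-sided inverse, so $\hat A\in\aut(\bL_n)$. Direct evaluation gives $\hat A(\rho,0,\ldots,0)=(\rho,0,\ldots,0)$ and $\hat A(w')=(w'_1,\eta a(\tilde w'),\eta b(\tilde w')i,0,\ldots,0)\in\bL_3\times\{0\}_{n-3}$. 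Taking $\Phi:=\hat A\circ\Phi_1$ yields the required automorphism.

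The only real point of care is the descent step: a rotation of $L_n$ induces a well-defined map on $\bL_n$ only when it commutes with the involution $(z_1,z')\mapsto(-z_1,z')$, which is exactly why one must use the $k=1$ version of the rotation lemma—keeping the first coordinate intact while normalizing the remaining $n-1$. With this compatibility in place, the rest is routine bookkeeping.
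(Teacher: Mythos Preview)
Your argument is correct and is precisely the intended proof: the paper states Theorem~\ref{thm:19} with the preface ``All these analyses and remarks imply the following crucial result,'' meaning the reader is expected to combine the corollary normalizing $z$ to $(\rho,0,\dots,0)$ with the $k=1$ case of the rotation lemma, exactly as you have done. Your explicit verification that an $\SO_n(\RR)$-rotation fixing the first coordinate descends through $\Lambda_n$ is a useful detail the paper leaves implicit.
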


\begin{proof}[Proof of Theorem~\ref{thm:1!}] The proof follows from Theorem~\ref{thm:19} and Corollary~\ref{cor:12}.
\end{proof}

\section{Invariant distences and metrics in $\bL_n$}
Let $D\subset\CC^n$ be a domain. For a point $z\in D$ and a vector $X\in\CC^n$, we recall that the infinitesimal Kobayashi metric at $z$ in the direction $X$ is defined to be 
\begin{multline*}
\kappa_{D}(z;X)=\sup\{\alpha>0: f:\DD\to D\text{ holomorphic},\\ f(0)=z, f'(0)=\frac{1}{\alpha}X\}.
\end{multline*}

We show the following.
\begin{theorem} Let $n\ge3$.
$$
\kappa_{\bL_n}(0;X)=|X_1|+p_{d-1}(X'),
$$
where $X=(X_1,X')\in\CC\times\CC^{n-1}$.
\end{theorem}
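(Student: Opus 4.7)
The plan is to reduce the infinitesimal problem to the case $n=3$ and then invoke the known tetrablock formula.

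\textbf{Normalize $X$.} The linear automorphisms $z\mapsto\eta\cdot\diag(1,A')\,z$ of $L_n$, with $\eta\in\TT$ and $A'\in\SO_{n-1}(\RR)$, descend through $\Lambda_n$ to automorphisms $(w_1,w')\mapsto(\eta^2 w_1,\eta A'w')$ of $\bL_n$ whose derivative at the origin is the same linear map. Applying the $\SO_{n-1}(\RR)\rtimes\TT$-rotation lemma to $X'\in\CC^{n-1}$, and then a further such automorphism to fix the phase of $X_1$, I may assume $X=(X_1,a,bi,0,\ldots,0)$ with $a=a(X')$, $b=b(X')\ge 0$. Both $\kappa_{\bL_n}(0;\cdot)$ and $|X_1|+p_{n-1}(X')$ are invariant under this subgroup.

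\textbf{Reduce to $\bL_3$.} Corollary~\ref{cor:12} supplies holomorphic maps $Q\colon\bL_3\to\bL_n$ and $\Pi\colon\bL_n\to\bL_3$ with $\Pi\circ Q=\id$. Chaining the two Kobayashi contractions,
\[
\kappa_{\bL_n}(0;dQ_0 Y)\le\kappa_{\bL_3}(0;Y)=\kappa_{\bL_3}(0;d\Pi_0 dQ_0 Y)\le\kappa_{\bL_n}(0;dQ_0 Y),
\]
gives $\kappa_{\bL_n}(0;dQ_0 Y)=\kappa_{\bL_3}(0;Y)$ for every $Y\in T_0\bL_3$. Applied to $Y=(X_1,a,bi)$, this reduces the problem to computing $\kappa_{\bL_3}(0;(X_1,a,bi))$.

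\textbf{Invoke the tetrablock.} By Lemma~\ref{lemma:tetrablock}, $\Psi:\bL_3\to\EE$ is a biholomorphism and $d\Psi_0(Y_1,Y_2,Y_3)=(Y_2+iY_3,\,Y_2-iY_3,\,Y_1)$, so $\kappa_{\bL_3}(0;(X_1,a,bi))=\kappa_\EE(0;(a-b,a+b,X_1))$. A short calculation of the same type as in the proof of the Lemma after~\eqref{eq:4} yields $p_{n-1}((a,bi,0,\ldots,0))=a+b$. Invoking the Kobayashi formula $\kappa_\EE(0;(A,B,C))=|C|+\max(|A|,|B|)$ for the tetrablock (cf.\ \cite{E-K-Z,A-W-Y}), and using $a,b\ge 0$ so that $\max(|a-b|,a+b)=a+b$, one obtains $|X_1|+(a+b)=|X_1|+p_{n-1}(X')$, as claimed.

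The main obstacle is the tetrablock input in the last step. Its upper bound $\kappa_\EE(0;(A,B,C))\le|C|+\max(|A|,|B|)$ can be realized by an explicit rational disc $f(\lambda)=(A\lambda/\alpha,B\lambda/\alpha,g(\lambda))$ chosen so that the defining polynomial $1-\lambda_1 f_1-\lambda_2 f_2+\lambda_1\lambda_2 f_3$ factors as $(1-\sigma_1\lambda_1\lambda)(1-\sigma_2\lambda_2\lambda)$ on $\overline\DD^{\,2}\times\DD$; the matching lower bound needs either the Lempert property for $\EE$ (as used in Theorem~\ref{thm:1!}) together with an explicit Carath\'eodory extremal of $\mu$-synthesis type, or the already-known Kobayashi formula for $\EE$ from \cite{E-K-Z}.
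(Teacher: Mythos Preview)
Your proof is correct and follows the same route as the paper: normalize $X'$ via an $\SO_{n-1}(\RR)$-rotation fixing the first coordinate, reduce to $\bL_3$ through the holomorphic retraction of Corollary~\ref{cor:12}, and apply the known tetrablock formula (which the paper simply cites from \cite{Young}, Theorem~2.1, so your last paragraph's concern is moot). The only slip is that once $X'$ is brought to $(a,bi,0,\ldots,0)$ with $a,b\ge0$, the remaining automorphisms $(\eta^2,\eta A')$ preserving that shape force $\eta=\pm1$, so you cannot in general fix the phase of $X_1$---but this step is unnecessary since only $|X_1|$ enters the formula.
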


\begin{proof} 
We know that (see \cite{Young}, Theorem 2.1)
$$
\kappa_{\EE}(0;X)=\max\{|X_1|,|X_2|\}+|X_3|\quad\text{ for any }X\in\CC^3.
$$
From the biholomorphicity between $\EE$ and $\bL_3$ we infer 
$$
\kappa_{\bL_3}(0;(X_1,X_2,X_3))=|X_1|+\max\{|X_2+iX_3|,|X_2-iX_3|\}.
$$
For the general case, 
there exists an $\eta\in\TT$ and a matrix 
$A\in\SO_n(\RR)$ such that
$$
AX=(X_1,\eta a(X'),\eta ib(X'),0,\dots,0).
$$
Take an automorphism $\Phi$ of $\bL_n$ generated by $A$. Then $\Phi(0)=0$ and
$$
\Phi'(0)X=(X_1,\eta a(X'),i\eta b(X'),0,\dots,0).
$$
Then
$$
\kappa_{\bL_n}(0;X)=\kappa_{\bL_n}(0;\Phi'(0)X)=\kappa_{\bL_3}(0;(X_1,\eta a(X'),\eta ib(X')),
$$
and, therefore,
$$
\kappa_{\bL_n}(0;X)=|X_1|+a(X')+b(X')=|X_1|+p(X').
$$
\end{proof}

Recall also the following result (see \cite{A-W-Y}, Corollary 3.7).
\begin{theorem}
For any $z=(z_1,z_2,z_3)\in\EE$ with $|z_1|\le |z_2|$, we have
$$
c_{\EE}(0;z)=\tanh^{-1} \frac{|z_2-\bar z_1z_3|+|z_1z_2-z_3|}{1-|z_1|^2}.
$$
\end{theorem}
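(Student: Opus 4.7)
The plan is to sandwich $c_\EE(0,z)$ between an explicit lower bound, via a family of test functions into the unit disc, and an explicit upper bound for $\ell_\EE(0,z)$, via an extremal analytic disc, and then to invoke the always-true inequality $c_\EE\le\ell_\EE$ to close the gap.

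For the lower bound, for each $\omega\in\overline{\DD}$ consider the linear-fractional function
$$\Psi_\omega(z)=\frac{z_2-\omega z_3}{1-\omega z_1}.$$
Fixing $\lambda_1=\omega$ in the defining condition of $\EE$, non-vanishing of $(1-\omega z_1)-\lambda_2(z_2-\omega z_3)$ for all $|\lambda_2|\le 1$ is exactly $|\Psi_\omega(z)|<1$; and $|z_1|<1$ on $\EE$ ensures the denominator does not vanish. Hence $\Psi_\omega:\EE\to\DD$ is holomorphic with $\Psi_\omega(0)=0$, so by definition of the Carath\'eodory distance,
$$c_\EE(0,z)\ge \tanh^{-1} M,\qquad M:=\sup_{|\omega|\le 1}|\Psi_\omega(z)|.$$

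To compute $M$, note that $\omega\mapsto\Psi_\omega(z)$ is analytic on $\overline{\DD}$, so the maximum is attained on $|\omega|=1$. Writing $|\Psi_\omega(z)|^2=M^2$ on the circle, substituting $\bar\omega=1/\omega$ and multiplying through by $\omega$ produces a quadratic in $\omega$; at the extremal $\omega_0$ this quadratic has a double root, so its discriminant vanishes, yielding in turn a quadratic in $M^2$. Writing $P=|z_2-\bar z_1 z_3|$ and $Q=|z_1z_2-z_3|$, direct expansion gives a quadratic whose two roots are $(P\pm Q)^2/(1-|z_1|^2)^2$; the factorization is made transparent by the identity $P^2-Q^2=(1-|z_1|^2)(|z_2|^2-|z_3|^2)$. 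The larger root yields $M=(P+Q)/(1-|z_1|^2)$, as claimed.

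For the upper bound $\ell_\EE(0,z)\le\tanh^{-1} M$, we construct an analytic disc $g:\DD\to\EE$ with $g(0)=0$ and $g(\sigma)=z$ for some $|\sigma|=M$. The disc is built from the extremal $\omega_0\in\TT$ from the double-root calculation: its components are explicit linear-fractional functions of $\zeta$ whose coefficients are determined by $\omega_0$ and $z$, and containment $g(\DD)\subset\EE$ is verified by checking $|\Psi_\omega\circ g|\le 1$ for every $\omega\in\overline{\DD}$, which reduces to an elementary one-variable inequality. The hypothesis $|z_1|\le|z_2|$ picks out the correct branch of the extremal construction; without it one uses the symmetric family obtained by swapping $z_1$ and $z_2$.

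The main obstacle is this upper bound: the test-function estimate and the algebra in the optimization are essentially mechanical, but producing the extremal Lempert disc and checking its image lies in $\EE$ requires the precise geometry of the tetrablock, and is the step where the hypothesis $|z_1|\le|z_2|$ enters in an essential way.
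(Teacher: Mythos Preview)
The paper does not prove this statement: it is quoted from \cite{A-W-Y}, Corollary~3.7, as a known result and used without argument, so there is no in-paper proof to compare against.

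Your plan is essentially the one carried out in \cite{A-W-Y}. The functions $\Psi_\omega$ are precisely the extremal family introduced there; the maximum-modulus reduction to $|\omega|=1$ is correct; and your discriminant computation checks out: the identity $P^2-Q^2=(1-|z_1|^2)(|z_2|^2-|z_3|^2)$ holds, and the quadratic in $M^2$ does factor with roots $(P\pm Q)^2/(1-|z_1|^2)^2$. So the lower bound $c_\EE(0,z)\ge\tanh^{-1}M$ is sound.

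The upper bound, however, is not proved but only promised. You assert that an extremal disc $g$ exists with linear-fractional components determined by the double-root $\omega_0$, and that containment ``reduces to an elementary one-variable inequality'', yet you neither write $g$ down nor verify anything---and you yourself flag this as ``the main obstacle''. In \cite{A-W-Y} the disc is constructed explicitly (their Theorem~3.5) and its image is checked to lie in $\EE$; the hypothesis $|z_1|\le|z_2|$ enters there to select the correct one of two symmetric constructions. Until the disc is actually exhibited and the containment verified, the equality is not established: the lower bound alone gives only one inequality. As written, your proposal is a correct outline of the known proof, with its hardest step left undone.
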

As a corollary of this we get.
\begin{corollary}
Let $z=(z_1,z')\in\bL_n$, where $z_1\in\CC$ and $z'\in\CC^{n-1}$. If $z'=0$ then
$c_{\bL_n}(0;z)=\tanh^{-1}(|z_1|)$. If $z'\not=0$ then we have
$$
c_{\bL_n}(0;z)=
\tanh^{-1}\Big(p(z')\Big|1-\frac{z_1\langle \bar z',z'\rangle}
{p^2(z')-|\langle z',\bar z'\rangle|^2}\Big|
+\frac{|z_1|p(z')^2}{p^2(z')-|\langle z',\bar z'\rangle|^2}\Big).
$$
\end{corollary}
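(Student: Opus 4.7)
The plan is to reduce the computation to the tetrablock formula of the preceding theorem, via the identifications already built up in the paper. The case $z'=0$ is immediate: the map $\lambda\mapsto(\lambda,0,\dots,0)$ embeds $\DD$ holomorphically into $\bL_n$ with left inverse the first coordinate projection, so both Carath\'eodory contraction estimates are sharp and $c_{\bL_n}(0;(z_1,0,\dots,0))=c_\DD(0;z_1)=\tanh^{-1}|z_1|$.

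For $z'\neq 0$, apply Lemma~2.5 with $k=1$ to obtain $A\in\SO_n(\RR)$ fixing the first coordinate together with $\eta\in\TT$ (satisfying $\eta^2\langle\bar z',z'\rangle=|\langle z',\bar z'\rangle|$) such that $Az=(z_1,\eta a(z'),\eta b(z')i,0,\dots,0)$. Since $A$ fixes the first coordinate of $\CC^n$, it descends through $\Lambda_n$ to a linear automorphism of $\bL_n$ sending $0$ to $0$ and carrying $z$ into $\bL_3\times\{0\}^{n-3}$. Writing this image as $Q(\tilde z)$ with $\tilde z=(z_1,\eta a(z'),\eta b(z')i)\in\bL_3$ and $Q,\Pi$ as in Corollary~\ref{cor:12}, the two-way contraction principle (applied to $Q$ and to $\Pi$) collapses to the equality
\[
c_{\bL_n}(0;z)=c_{\bL_3}(0;\tilde z).
\]

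Next push $\tilde z$ forward by the biholomorphism $\Psi:\bL_3\to\EE$ of Lemma~\ref{lemma:tetrablock}. Writing $a=a(z')$, $b=b(z')$ and using $a^2-b^2=|\langle z',\bar z'\rangle|$ together with the phase identity $\eta^2|\langle z',\bar z'\rangle|=\langle z',\bar z'\rangle$ (which follows from the relation defining $\eta$), a short computation yields
\[
\Psi(\tilde z)=\bigl(\eta(a-b),\;\eta p(z'),\;z_1+\langle z',\bar z'\rangle\bigr).
\]
Since $|w_1|=a-b\le a+b=|w_2|$, the tetrablock formula of the preceding theorem applies to $w=\Psi(\tilde z)$.

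It remains to simplify. Two observations drive the algebra. First, $w_1w_2=\eta^2|\langle z',\bar z'\rangle|=\langle z',\bar z'\rangle$, so $w_1w_2-w_3=-z_1$ and the second modulus in the tetrablock formula collapses to $|z_1|$. Second, using $\bar\eta^2(a-b)=\langle\bar z',z'\rangle/p(z')$ and $1-|w_1|^2=(p(z')^2-|\langle z',\bar z'\rangle|^2)/p(z')^2$, one computes
\[
\frac{|w_2-\bar w_1 w_3|}{1-|w_1|^2}=p(z')\,\Bigl|1-\frac{z_1\langle\bar z',z'\rangle}{p(z')^2-|\langle z',\bar z'\rangle|^2}\Bigr|
\]
and $|w_1w_2-w_3|/(1-|w_1|^2)=|z_1|p(z')^2/(p(z')^2-|\langle z',\bar z'\rangle|^2)$, which assembled inside $\tanh^{-1}$ is exactly the claimed formula. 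The main obstacle is precisely this bookkeeping: carrying the unimodular phase $\eta$ through $\Psi$ and recognising how the Hermitian quantity $\langle\bar z',z'\rangle$ re-emerges naturally in the final expression.
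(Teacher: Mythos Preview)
Your proof is correct and follows essentially the same route as the paper: reduce $\bL_n$ to $\bL_3$ by the $\SO_n(\RR)$-rotation fixing the first coordinate, pass to the tetrablock via $\Psi$, apply the Carath\'eodory formula there, and simplify using the identities $a^2-b^2=|\langle z',\bar z'\rangle|$ and the phase relation for $\eta$. The only cosmetic difference is that the paper first records the intermediate formula for $c_{\bL_3}(0;(z_1,z_2,z_3))$ before substituting the rotated coordinates, whereas you push $\tilde z$ directly into $\EE$; your bookkeeping with $\bar\eta^2(a-b)=\langle\bar z',z'\rangle/p(z')$ and $1-(a-b)^2=(p^2-|\langle z',\bar z'\rangle|^2)/p^2$ is exactly what is needed.
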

\begin{proof}
In a similar way as above, if $|z_2+iz_3|\le |z_2-iz_3|$ then
\begin{multline*}
c_{\bL_3}(0, (z_1,z_2,z_3))=\tanh^{-1}\Big(\big|z_2-iz_3-\frac{z_1 \overline{(z_2+iz_3)}}{1-|z_2+iz_3|^2}\big|\\
+\frac{|z_1|}{1-|z_2+iz_3|^2}\Big).
\end{multline*}
If $z\in\bL_n$, then using appropriate automorphism, we may assume that $z=(z_1,\bar\eta a(z'),\bar\eta b(z')i,\dots,0)$ and, therefore,
\begin{multline*}
c_{\bL_n}(0;z)=\tanh^{-1}\Big(\big|\bar\eta p(z')-\frac{\eta z_1 (a(z')-b(z'))}{1-|a(z')-b(z')|^2}\big|\\
+\frac{|z_1|}{1-|a(z')-b(z')|^2}\Big),
\end{multline*}
where $\eta\in\TT$ is such that $\eta^2\langle z',\bar z'\rangle=
|\langle z',\bar z'\rangle|$. From this we obtain the formula.
\end{proof}

\begin{remark}
In a forthcoming paper, using similar technique, we study complex geodesics in $\bL_n$ (see \cite{Edi2}).
\end{remark}

\end{document}